\title{Price's Law for the massless Dirac--Coulomb system
}
\author[D. Baskin]{Dean Baskin}
\email{dbaskin@math.tamu.edu}
\address{Department of Mathematics, Texas A\&M University \\ Mailstop
  3368 \\ College Station, TX 77843}
\author[J. Gell-Redman]{Jesse Gell-Redman}
\email{jgell@unimelb.edu.au}
\address{School of Mathematics and Statistics, University of Melbourne \\ VIC 3010 \\ Australia}
\author[J.L. Marzuola]{Jeremy L. Marzuola}
\email{marzuola@math.unc.edu}
\address{Department of Mathematics, UNC-Chapel Hill \\ CB\#3250
  Phillips Hall \\ Chapel Hill, NC 27599}
  \thanks{The authors are grateful to Peter Hintz and Andras Vasy for
    helpful conversations at various points regarding the relationship
    between radiation fields and Price's Law.
D.B.\ was partially supported by NSF CAREER grant
DMS-1654056, and by the Mathematical Sciences
Research Institute in Berkeley, California, supported by the NSF under Grant No. DMS-1928930.
 J.G.R.\ acknowledges support from Australian Research Council through
 the Discovery Project grants DP180100589 and DP210103242.
 J.L.M.\ acknowledges support from NSF grant DMS-1909035.
}
\newcommand{\charge}{\mathbf{Z}}
\newcommand{\cf}{\mathrm{cf}}
\newcommand{\mf}{\mathrm{mf}}
\newcommand{\scri}{\mathcal{I}}
\newcommand{\ff}{{\rm ff}}
\newcommand{\tf}{{\rm tf}}
\newcommand{\E}{\mathcal{E}}
\newcommand{\F}{\mathcal{F}}
\newcommand{\DC}{\text{DC}}
\newcommand{\invsq}{\text{IS}}
\renewcommand{\Im}{\operatorname{Im}}
\newcommand{\norm}[1]{{\left\lVert{#1}\right\rVert}}
\newcommand{\abs}[1]{\lvert #1\rvert}
\newcommand{\yyy}{y}
\newcommand{\lap}{\Delta}
\newcommand{\pd}[1][]{\partial_{#1}}
\newcommand{\reals}{\mathbb{R}}
\newcommand{\sphere}{\mathbb{S}}
\newcommand{\CC}{\mathbb{C}}
\newtheorem{theorem}{Theorem}
\newtheorem{lemma}[theorem]{Lemma}
\theoremstyle{definition}
\newtheorem{remark}[theorem]{Remark}
\numberwithin{theorem}{section}
\begin{document}

\begin{abstract}
  We consider the pointwise decay of solutions to wave-type
  equations in two model singular settings.  Our main result is a form
  of Price's law for solutions of the massless Dirac--Coulomb system
  in $(3+1)$-dimensions.  Using identical techniques, we prove a
  similar theorem for the wave equation on Minkowski space with an
  inverse square potential.  One novel feature of these singular
  models is that solutions exhibit two different leading
  decay rates at timelike infinity in two regimes, 
  distinguished by whether the spatial momentum along a curve which
  approaches timelike infinity is zero or non-zero.  An important
  feature of our analysis is that it yields a precise description of
 solutions at the interface of these two regions which comprise the
 whole of timelike infinity.
\end{abstract}

\maketitle

\section{Introduction}
\label{sec:introduction}

We consider the long-time
asymptotics of solutions of the wave equation on backgrounds with
scale invariant singular potentials and provide pointwise
  decay estimates in all asymptotic regimes.
The models we consider here are those of the massless Dirac--Coulomb
equation, and of the wave equation with an inverse
square potential, the former equation was considered by Baskin and
Gell-Redman (with Booth)~\cite{DirCoulRF} and the latter by Baskin
and Marzuola~\cite{coneradfield}.  In both instances, in a departure
from standard formulations of Price's law, we find that solutions
exhibit different leading rates of decay depending on how timelike
infinity is approached.

For the Dirac--Coulomb equation, the microlocal framework
  in prior work~\cite{DirCoulRF} produces regions of timelike infinity at
  which solutions exhibit distinct decay rates.  Indeed, the regions
  of timelike infinity arise as boundary hypersurfaces of an
  appropriate spacetime compactification; that work~\cite{DirCoulRF}
  provides a global description of solutions on a closely related compactification.
This requires a clear understanding of the nature of solutions (namely in
terms of polyhomogeneity, which they possess in certain regions of
spacetime) extracted from \cite{DirCoulRF}.  We
show, moreover, that the closely related description of
  solutions to wave equations in conic setting~\cite{coneradfield}
fits into the same framework and thus
the solutions to the wave equation with inverse square potentials have
the same feature.

We consider first the massless Dirac--Coulomb equation.  Thus, let $\psi : \reals \times \reals^3 \to \CC^4$ a solution to
\begin{align}
  \label{eq:dc}
& \left( \gamma^0 \left( \partial_t + \frac{i Z}{r} \right) + \sum_{j=1}^3 \gamma^j \partial_j \right) \psi = 0, \\
& \psi(0) = \psi_0 \in  (C^{\infty}_{c}(\reals^{3}\setminus\{ 0\}))^4 ,\notag
\end{align}
for $\gamma^{0,1,2,3}$ the gamma (or Dirac) matrices. In
  other words,
$\gamma^\alpha \in  {\rm Mat} (4;\mathbb{C})$ satisfy 
\begin{equation*}
  \gamma^{\alpha}\gamma^{\beta} + \gamma^{\beta}\gamma^{\alpha} = -2
  \eta^{\alpha\beta} \operatorname{Id}_{4},
\end{equation*}
where $\eta^{\alpha\beta}$ are the components of the Minkowski metric,
i.e.,
\begin{equation*}
  \eta^{\alpha\beta} =
  \begin{cases}
    -1 & \alpha =\beta=0 ,\\
    1 & \alpha = \beta \in \{1,2,3\} ,\\
    0 & \alpha \neq \beta.
  \end{cases}
\end{equation*}
Here $\\charge$ is a real constant with $|\charge| < \frac12$; this range allows
the use of the Hardy inequality and ensures the essential
self-adjointness of the underlying Hamiltonian.

We also consider the wave equation on $\reals \times
\reals^{n}$, $n\geq 3$, with an
inverse square potential.  That is, we take $u$ a solution
to
\begin{align}
  \label{eq:inv-square}
  &\pd[t]^{2}u - \lap u + \frac{\digamma}{r^{2}}u = 0,\\
  &(u,\pd[t]u)|_{t=0} = (\phi_{0}, \phi_{1}) \in
    C^{\infty}_{c}(\reals^{n}\setminus\{ 0\})\times
    C^{\infty}_{c}(\reals^{n}\setminus\{0\}), \notag
\end{align}
where $\lap$ is the negative definite Laplacian on $\reals^{n}$ and
\begin{equation*}
  \digamma > -\frac{(n-2)^{2}}{4}.
\end{equation*}
Due to the assumption on $\digamma$, the Hardy inequality
  implies that the form $\langle (-\lap +
  \frac{\digamma}{r^{2}} ) \phi, \phi \rangle$ is positive definite
  for $\phi \in C^\infty_c(\mathbb{R}^3 \setminus \{ 0 \})$.  For those values of $n$ and $\digamma$ for which the
  Hamiltonian fails to be essentially
  self-adjoint, we use the Friedrichs extension.

In three dimensions, our main result is the following:
\begin{theorem}
  \label{metathm}
  Fix $\chi \in C^{\infty}_{c}(\reals^{3}\setminus 0)$.
  For $\psi$ the solution of the Dirac--Coulomb system~\eqref{eq:dc} and
  $u$ the solution of the inverse square wave
  equation~\eqref{eq:inv-square} with $n = 3$, we have, as $t\to \infty$,
  \begin{equation*}
    \norm{\chi(\cdot) \psi(t,\cdot)} \lesssim \langle t \rangle^{-3 - \alpha(\charge)}, \quad
    \norm{\chi(\cdot)u(t, \cdot)}\lesssim \langle t \rangle^{-2-\beta(\digamma)},
  \end{equation*}
where the exponents are given in terms of $\charge$ and
$\digamma$ by 
\begin{equation*}
  \alpha(\charge) = 2 \sqrt{1-\charge^{2}}-2,
\end{equation*}
and
\begin{equation*}
  \beta(\digamma) = \sqrt{1 + 4\digamma} -1
\end{equation*}
provided $\sqrt{1 + 4\digamma}$ is not an odd integer.  If $\sqrt{1 + 4\digamma}$ is an odd integer, then
  $\beta (\digamma)$ is $\sqrt{9
    + 4\digamma}-1$.   

In contrast, for $0 < \gamma < 1$, the solutions obey the estimates
\begin{equation*}
  \abs{\psi(t,\gamma t, \theta)} \lesssim t^{-3-\frac{\alpha(\charge)}{2}} , \quad \abs{u(t,\gamma t,
    \theta)} \lesssim t ^{-2-\frac{\beta(\digamma)}{2}}.
\end{equation*}

Moreover, all decay rates above are sharp for $\charge, \digamma\neq
0$ in the permitted range (see Section \ref{sec:sharp}).
\end{theorem}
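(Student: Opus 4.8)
The plan is to deduce the stated pointwise rates from the polyhomogeneous structure of the solutions on a compactification of spacetime, which is the principal output of the radiation-field analyses in~\cite{DirCoulRF} for the Dirac--Coulomb system and in~\cite{coneradfield} for the inverse-square wave equation. I will describe the argument for $\psi$; the argument for $u$ is identical after replacing the relevant index set. The first step is to recall the compactification of $\reals_t\times\reals^3_x$, closely related to that of~\cite{DirCoulRF}, on which $\psi$ is polyhomogeneous conormal, together with its boundary hypersurfaces. The geometric point is that future timelike infinity decomposes into two pieces whose union is the whole of it: the interior ``hyperbolic'' region, parametrized by $\gamma = r/t \in (0,1)$, along which the spatial momentum is nonzero, and the corner abutting the singular worldline $r = 0$, along which the spatial momentum vanishes and where $\chi(\cdot)\psi(t,\cdot)$ is supported. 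From~\cite{DirCoulRF} I take as given that the index sets at these faces are governed by the indicial roots of the spatial Dirac--Coulomb operator at $r = 0$, the relevant exponent being $\nu = \sqrt{1-\charge^2}$.

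The second step is to read off the leading term of the expansion and convert it into a bound in $t$. Let $\rho$ be a boundary defining function of timelike infinity, so that $\rho \sim (t^2 - r^2)^{-1/2}$ and hence $\rho\sim t^{-1}$ on any curve $r = \gamma t$. The leading term has the form $\rho^{a} f(\gamma)$, where $a = 2 + \nu = 3 + \alpha(\charge)/2$ is fixed by the indicial root and $f$ is a coefficient on the interior of timelike infinity. Along $r = \gamma t$ with $\gamma \in (0,1)$ fixed, $f(\gamma)$ is a nonzero constant and one obtains the rate $t^{-a} = t^{-3-\alpha(\charge)/2}$. The coefficient $f$, however, inherits the conic singularity of the spatial problem at $\gamma = 0$, namely $f(\gamma)\sim \gamma^{\nu-1}$, and since $\nu - 1 = \alpha(\charge)/2$, along the worldline (where $\gamma = r/t \to 0$) the factor $(r/t)^{\nu-1}$ combines with $\rho^{a}\sim t^{-a}$ to give a profile bounded by $r^{\nu-1}\langle t\rangle^{-3-\alpha(\charge)}$; since $\chi$ is supported away from $r = 0$, the spatial factor $r^{\nu-1}$ is harmless and the $\chi$-localized norm obeys $\langle t\rangle^{-3-\alpha(\charge)}$. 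For the wave equation the same computation applies with $\nu$ replaced by $\mu = \sqrt{1+4\digamma}$, the conic exponent being $(\mu-1)/2 = \beta(\digamma)/2$; the exceptional case in which $\mu$ is an odd integer is a coincidence of indicial roots at which the would-be leading coefficient is forced to vanish (or produces a logarithm), so the next exponent $\sqrt{9+4\digamma}$ governs the decay instead.

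The main obstacle, and the new point emphasized in the abstract, is the matching at the interface where the two regimes meet, i.e.\ the corner at which the hyperbolic interior abuts the worldline $\gamma = 0$. There one must show that the expansion is \emph{joint} at this corner, with a product-type index set, and that the conic behavior $f(\gamma)\sim\gamma^{\nu-1}$ of the leading interior coefficient matches continuously onto the conormal structure along the worldline extracted from~\cite{DirCoulRF}, so that neither regime produces a slower rate than claimed and the estimates are uniform in $\gamma$ up to the corner. I expect this uniform control across the interface, rather than the bookkeeping of exponents, to be the technical heart of the argument: it is what upgrades the separate expansions in the two regions into a single description valid on all of timelike infinity.

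Finally, sharpness for $\charge,\digamma\neq 0$ reduces to exhibiting initial data for which the leading polyhomogeneous coefficient at the relevant face is nonzero, so that the upper bounds are attained; this is carried out in Section~\ref{sec:sharp}. A strictly faster rate would force that coefficient to vanish, contradicting the explicit computation there, which is why the rates cannot be improved.
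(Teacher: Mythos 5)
Your outline coincides with the paper's: both arguments deduce the theorem from the polyhomogeneous structure supplied by \cite{DirCoulRF} and \cite{coneradfield}, and your exponent bookkeeping --- leading order $\rho^{2+\nu}$ at timelike infinity with a coefficient behaving like $\gamma^{\nu-1}$ at the worldline, so that the two rates $t^{-3-\alpha(\charge)/2}$ and $t^{-3-\alpha(\charge)}$ are the leading exponents of $\E_{\DC}$ and of $\E_{\DC}+\F_{\DC}$ respectively --- is exactly the computation the paper performs, as is the appeal to generic non-vanishing of the leading coefficient for sharpness.

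The gap is at the step you yourself flag as the technical heart. You propose to control the interface by proving that the expansion is \emph{joint} at the corner $C_{+}\cap\cf$ with a product-type index set. But the input available from the prior works is only \emph{partial} polyhomogeneity: each coefficient $a_{\sigma k}$ of the expansion at $C_{+}$ is polyhomogeneous at $\cf$, while the remainder is merely conormal there, and the paper explicitly notes that such distributions need not admit any expansion at $\cf$, let alone a joint one. So the statement you say ``one must show'' is not available, and your proposal stops exactly where the work begins. The paper's substitute --- and its main technical contribution --- is to blow up the corner (the north pole), creating the face $\tf_{+}$, and to prove Lemma~\ref{lemma:phg-lift}: a partially polyhomogeneous distribution pulls back to the resolved space to have an honest expansion at $\tf_{+}$ with index set $\E+\F$, the conormal remainder being absorbed because the truncation order $L$ is arbitrary. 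Your leading-term computation produces the first term of that expansion, but without the blow-up lemma (or at least a uniform term-by-term estimate that also controls every lower-order term and the conormal remainder on the set where $x\sim r/t$ and $\rho\sim 1/t$) the bound on compact spatial sets is not justified. A smaller inaccuracy: in the exceptional case where $\sqrt{1+4\digamma}$ is an odd integer, the mechanism is not that the leading coefficient ``produces a logarithm'' --- that would not improve the rate --- but that the relevant $\sigma$ is a pole of the scattering matrix and not of the resolvent, so the corresponding term is supported on $S_{+}$ and vanishes identically on $C_{+}$ and $\tf_{+}$; the next angular mode then yields $\beta(\digamma)=\sqrt{9+4\digamma}-1$.
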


We emphasize that the main novelty of Theorem~\ref{metathm} is the
first part; the latter estimates (for $0 < \gamma < 1$) were already
known in our prior works~\cite{coneradfield,DirCoulRF}.  Thus we see
that \emph{the rates of decay in the two regimes differ}; one holds on
bounded spatial sets (here the support of the arbitrary compactly
supported function $\chi$) and the other holds on timelike
trajectories with nonzero spatial momentum. Note that as the parameters $\charge$ and $\digamma$ vary
farther from $0$, the contrast between these two behaviors grows.
For Dirac--Coulomb, the behavior on compact subsets always decays
more slowly than in the $\gamma > 0$ case as $\alpha(\charge) < 0$.
In contrast, for the inverse square wave equation, the sign of $\digamma$ determines in which region the
solution's decay is faster. 

We also note that the results above naturally invite a
  comparison with $t^{-3}$ (for Dirac--Coulomb) and $t^{-2}$ (for
  inverse square) rates.  These particular rates may seem unnatural,
  as Price's law would suggest rates of $t^{-4}$ and $t^{-3}$,
  respectively, for large classes of smooth perturbations of Minkowski
  space when $\charge = 0$ or $\digamma = 0$.  In the context of these
  families of equations, however, the rates predicted by Price's law
  are the outlier and arise from a significant cancellation related to
  Huygens' principle. A brief discussion of this discrepancy (and
  related ones) can be found in Section~\ref{sec:an-asympt-expans}
  below as well as the first author's prior work~\cite{BVW2}.  We
further refer the reader to the works of Tataru~\cite{Tataru} and Morgan
\cite{Morgan20} for an overview of the $t^{-3}$ decay rate in the
asymptotically Minkowski setting.

\begin{remark}
  Here we work with the explicit operators of the form
  $\pd[t]^{2} - \lap + \frac{\digamma}{r^{2}}$, but it is natural to
  ask about equations of the form
  $\Box_g + \frac{\digamma}{r^{2}} + V$ for $g$ an asymptotically flat
  metric and $V$ a non-singular, lower order perturbation.  The
  effects of these perturbations would most strongly manifest at low
  frequencies, and hence we conjecture that the decay rates we prove
  here would still be the expected behavior.  Proving this would
  require adapting the second microlocalization tools of Vasy
  \cite{vasy:lag1,vasy:lag2,vasy:lag3} as implemented in the wave
  decay setting in \cite{Hintz_PL} to the conic setting, and is
  an important topic for future work.
\end{remark}

As observed in the previous works of the first author, Vasy, and
Wunsch~\cite{BVW1,BVW2}, the pointwise decay rates of waves are
in some contexts governed by the
asymptotics of the associated radiation field.  For wave equations on both Minkowski and a
class of asymptotically Minkowski spaces, the corresponding decay
estimate in a neighborhood of null infinity is often what
  is referred to when one discusses Price's Law.  The uniformity of
the estimate across timelike infinity there can be interpreted as a
consequence of the smoothness of the operator on the radial
compactification.
In contrast, the singular potentials considered here exhibit a
different rate of decay at precisely the region of timelike infinity
corresponding to the location of the singularity.

\medskip

There is a plentitude of recent results on Price's Law in the setting
of stationary spacetimes.  We refer to Schlag \cite{Sch21} for a more
thorough overview than we provide here.  Such work goes back at least
to the analysis using spherical harmonic decompositions in
Donninger--Schlag--Soffer \cite{DSS11} and Schlag--Soffer--Staubach
\cite{SSS10-I,SSS10-II}, followed by the work of Tataru \cite{Tataru}
that relied upon Fourier localization techniques.  The methods of
Tataru have now been generalized to a large number of
settings~\cite{MTT12,MTT17}; in particular, the role of regularity of
the metric at infinity was recently studied in the work of
Morgan~\cite{Morgan20} and Morgan--Wunsch~\cite{MorganWunsch21}.  The
work of  Looi~\cite{Looi:2022} considers a
closely related problem to those considered by Morgan and
  Morgan--Wunsch.  The results of Hintz~\cite{Hintz_PL} also employ
microlocal techniques and prove a sharp decay rate for solutions of
the wave equation with a smooth decaying potential, albeit in a
substantially more geometrically complex setting.  Hintz's
  work requires that the potential decay at least as fast as
  $r^{-3}$, i.e., strictly faster than the decay assumed in our
paper.  A pointwise decay estimate for the wave equation with
slowly decaying potential in one spatial dimension was proved by Donninger--Schlag in
\cite{DS10}, though their result has regular potentials at $r=0$ that
decay slightly better than $|r|^{-2}$ as
  $|r| \to \infty$.
The result we prove here for
inverse square potentials is already in the literature (with
substantially different proofs). We refer the reader
  to the recent work of Gajic~\cite{Gajic}, which obtains the same
bimodal decay estimate for the inverse square potential as well as its
analogues on black hole backgrounds.  Gajic's result builds upon the
study of late time asymptotics for the wave equation beginning in the
works \cite{AAG1,AAG2,AAG3}.  See also a forthcoming work of Van de
Moortel and Gajic cited in \cite{Gajic} addressing the Minkowski
setting. However, the authors are aware of no such
  results for the massless Dirac--Coulomb equation.

For nonlinear applications, dispersive estimates for wave and
Schr\"odinger equations have been studied quite broadly.  For
instance, the wave equation with an inverse square potential has been
studied in \cite{BPSS,PSS,MZZ}. The Schr\"odinger equation with an
inverse square potential has been studied in \cite{Duy,KMVZZ1,KMVZZ2}
among many others.  In particular, the distinction between
$\digamma > 0$ and $\digamma < 0$ arises and there is actually a
threshold $-1/4 + 1/25 < \digamma^* < 0$ for which local
well-posedness theory holds for critical semilinear equations with
such potentials.  In the long run, we hope that tighter control on
pointwise behavior of waves will give insight into further nonlinear
applications, see \cite{KMVZ,AM} for the most recent literature
discussion on this problem to date.

The rest of the paper proceeds as follows.  In Section~\ref{S2}, we recall the relevant compactification of our
spacetimes and discuss the necessary preliminary analytic
  results.  Section~\ref{sec:an-asympt-expans} recalls the our main
  results from prior work and proves the constituent results of the
  main theorem (including the $n$-dimensional version for the wave
  equation with an inverse square potential).
Finally, in Appendix \ref{sec:reson-calc}, we provide the resonance
calculations for the case of inverse square potentials 
  providing the explicit decay rates in
  Section~\ref{sec:an-asympt-expans}, the analogous resonances for Dirac-Coulomb already being
  available from \cite{DirCoulRF}.

\section{Compactifications and partial polyhomogeneity}
\label{S2}

We treat both the Dirac--Coulomb equation and the wave equation with
inverse square potential as if they were smooth operators on a
manifold with a conic singularity.  In particular, as $\{0\} \subset
\reals^{n}$ is the singular locus of the potential, we view
$\reals^{n}\setminus\{0\}$ as the interior of the
infinite Riemannian cone over the sphere $\mathbb{S}^{n-1}$.

We now construct a compactification of the overall
  spacetime
  $\mathbb{R}_t \times (\mathbb{R}^n \setminus \{ 0 \}) =
  \mathbb{R}_t \times (0, \infty)_r \times \mathbb{S}^{n-1}$,
  treating it exactly as we would the product of a real time axis
  $\mathbb{R}_t$ with an arbitrary infinite cone $(0,
  \infty)_r \times Z$.  We start by compactifying
$\reals_{t}\times (0,\infty)_{r}$ (and thus
$\reals_{t}\times (0,\infty)_{r} \times Z$) by stereographic
projection to a quarter-sphere $\sphere^{2}_{++}$ as depicted in
Figure~\ref{fig:1-d-compact}.  In other words, the map
$\reals_{t}\times (0,\infty)_{r} \to \sphere^{2} \subset \reals^{3}$
given by
\begin{equation*}
  (t,r) \mapsto \frac{(t,r,1)}{\sqrt{1+t^{2}+r^{2}}}
\end{equation*}
sends $\reals_{t}\times (0,\infty)_{r}$ to the interior of the quarter-sphere given by
\begin{equation*}
  \sphere^{2}_{++} = \{ (z_{1}, z_{2}, z_{3}) \in \sphere^{2} \subset
  \reals^{3} \mid z_{2} \geq 0, \ z_{3} \geq 0\}.
\end{equation*}
The quarter-sphere $\sphere^{2}_{++}$ is a manifold with corners and
has two boundary hypersurfaces defined by the boundary defining
functions $z_{2}$ and $z_{3}$.  We let $\cf$ (or the \emph{conic
  face}) be the hypersurface connecting NP to SP defined by
$\{z_{2}=0\}$, where
\begin{equation*}
  z_{2}= \frac{r}{\sqrt{1+t^{2}+r^{2}}},
\end{equation*}
and we let $\mf$ (or the \emph{main face}) be the face defined by
$\{z_{3}=0\}$, where, over the interior,
\begin{equation*}
  z_{3} = \frac{1}{\sqrt{1+t^{2}+r^{2}}},
\end{equation*}
thus $z_3 \to 0$ along rays where either $t$ or $r$ goes to infinity.
We now let $M = \sphere^{2}_{++}\times \sphere^{n-1}$ and identify the
interior of $M$ with $\reals_{t}\times (\reals^{n}\setminus\{0\})$ by
introducing spherical coordinates in the spatial factor and using the
above identification to map $(t,r)$ to the interior of $\sphere^{2}_{++}$.

Equivalently, we can compactify $\mathbb{R}_t \times (0, \infty)_r$
to a half ball $D_+ = \{ w = (w_1, w_2) \in \mathbb{R}^2: |w| \le 1, w_2\ge
0 \}$ via the mapping
$$
(t, r) \mapsto \frac{(t, r)}{1 + \sqrt{1 + t^2 + r^2}}.
$$
We leave it to the interested reader to show that these
compactifications are equivalent, in the sense that they induce
diffeomorphisms of manifolds with corners.  In particular, the
boundary hypersurface (bhs)
$w_2 = 0$ corresponds to $z_2 = 0$ and the bhs $|w| = 1$ corresponds to $z_3 = 0$.

\begin{figure}
  \centering
      \def\svgwidth{100mm}
   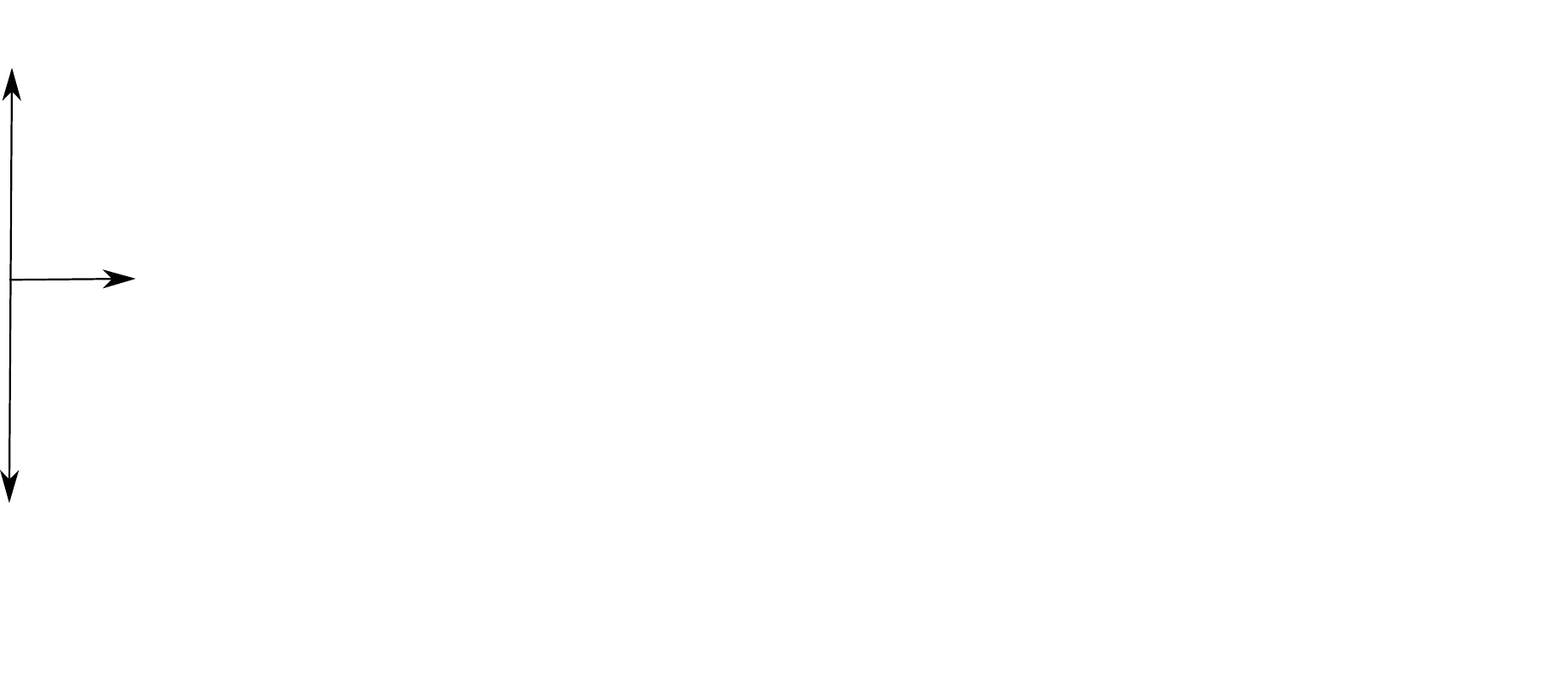
  \caption{The compactification $M$ of $\reals_t \times (0,\infty)_r \times Z$
    with the $Z$ factor suppressed, mapping on the left with detail on
    the right.  The image of $\reals_t \times (0,\infty)_r$ can be thought of either as the right half disk $D_+$ or the quarter sphere
  $\mathbb{S}_{++}$.  $S_\pm$ are, respectively, collapsed future/past
null infinity.  $C_\pm$ and $C_0$ are three subsets of $\mf$
corresponding to past and future timelike infinity and spacelike
infinity, respectively.}
  \label{fig:1-d-compact}
\end{figure}

\begin{figure}
  \centering
      \def\svgwidth{100mm}
   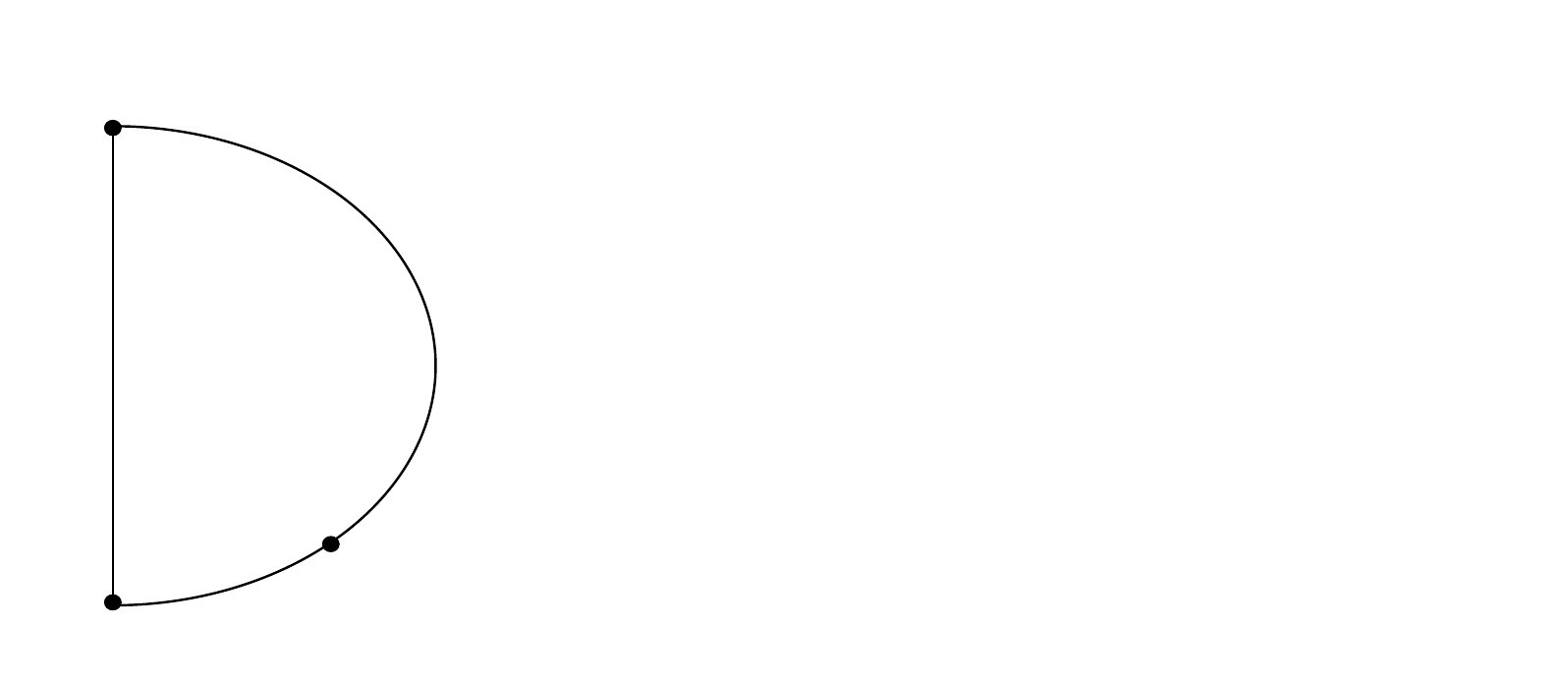
   \caption{A schematic view of the space $X$ (on the right) we use to
     study Price's Law.  The labeled region $R$ is the neighborhood of
     timelike infinity in which the solutions under consideration
     exhibit distinct asymptotic behavior at the three boundary
     hypersurfaces.}
  \label{fig:rf-blowup}
\end{figure}

To understand the detailed asymptotic behavior of solutions we perform
two blow ups of the compactification $M$. The first introduces future
null infinity, which is the natural domain of definition of the
radiation field, while the second blow up introduces the precise
region needed to distinguish between the limits of timelike paths with
zero and non-zero spatial momentum, thus providing two
  boundary hypersurfaces which distinguish the two regimes of
  asymptotic behavior of solutions at timelike infinity.

We recall from previous work~\cite{BVW1,BVW2} the first blow up, which
induces the construction of the manifold with corners on which the
radiation field naturally lives. As in those works, we
denote collapsed null infinity by $S$; this is the intersection of the
closure of maximally extended null geodesics with the introduced
boundary $\mf$.  The submanifold is given by $S = \{ v = \rho = 0\}$
where $v$ is a smooth function on $M$ which vanishes simply on the
sets $x = \pm 1$.  (Such $v$ should be chosen to be independent of
$\theta \in \sphere^{2}$ so that $v, \rho$ and
coordinates on the sphere form local coordinate charts near $S$.)
This submanifold naturally splits into two pieces according to whether
$\pm t > 0$ near the component, which we denote by $S_{\pm}$.  The
complement of $S$ in $\mf$ consists of three regions: the region
$C_{0}$ being those points in $\mf$ where $v < 0$, while the region in
$\mf$ where $v>0$ has two components, denoted $C_{\pm}$ according to
whether $\pm t > 0$ nearby.

We now \emph{blow up} $S_+$ in $M$ by replacing it with its inward
pointing spherical normal bundle.\footnote{The reader may wish to
  consult Melrose's book~\cite{Melrose:APS} for more details on the
  blow-up construction.}  In the product cone setting, this is
equivalent to blowing up a pair of points in $\sphere^{2}_{++}$ and
then taking the product with the link $Z$.  This process replaces $M$ with a
new manifold $\overline{M} = [M; S_+]$ on which polar coordinates around
the submanifold are smooth; the structure of this manifold with
corners depends only on the submanifold $S_+$ (and not on the particular
choice of defining functions $v$ and $\rho$).  

The space $\overline{M}$ -- which is not yet the final resolution we
consider, as we have yet to blow up the north pole (zero
momentum future timelike infinity) -- is again a manifold with corners and has
four boundary hypersurfaces: the lifts of $C_+$, the union $C_0 \cup C_-$, the lift of $\cf$, again denoted $\cf$, and the
new boundary hypersurface consisting of the pre-image of $S_+$ under
the blow-down map.  This new boundary
hypersurface, which is essentially future null infinity,
is denoted $\scri^{+}$.  Moreover, $\scri^{+}$ is
naturally a fiber bundle over $S_{+}$ with fibers diffeomorphic to
intervals.  Indeed, the interior of the fibers is naturally an affine
space (i.e., $\reals$ acts by translations, but there is no natural
origin).  Figure~\ref{fig:rf-blowup} depicts this blow-up
construction.  Given choices of functions $v$ and $\rho$, the fibers of the interior of
$\scri^{+}$ in $\overline{M}$ can be identified with $\reals \times Z$ via
the coordinate $s = v/\rho$.

Our focus on the $t > 0$ region of space-time is a matter of
convenience only; it is also possible, indeed very useful, to blow up
$S_-$ and thereby obtain a boundary hypersurface corresponding to past
null infinity.  Indeed, the face obtained by the blow up of $S_-$ is
the domain of the past radiation field.  For Dirac-type equations, one
can see distinct behavior (though related by a parity
  transformation) when comparing $\scri^+$ to the corresponding face
$\scri^-$ over $S_-$, but these details are irrelevant for our
discussion here which treats only the future timelike infinity
behavior.

Finally, we blow up the ``north pole'' ${\rm NP}= \{x = 0, \rho = 0 \}$.  We
  thereby obtain a manifold with corners $X = [\overline{M}; {\rm NP}]$ with
  an additional boundary hypersurface ${\rm tf}_+$ which separates
  $\cf$ from $C_+$.  Near the intersection of ${\rm tf}_+$ with $C_+$,
  we can use the boundary defining function $\rho_{{\rm tf}_+} = x +
  \rho$, while a full set of coordinates is given by
  $$
\rho_{{\rm tf}_+} = x+\rho, \quad \yyy =  \frac{x -
  \rho}{x + \rho}, \quad \theta,
$$
where $\theta$ are local coordinates on $\sphere^{2}$.

We now come to the main point of this blow up construction, which can
be understood as follows.  Still working on $\overline{M}$, the solutions to the differential equations
we consider here are tempered distributions which are conormal (on
$\overline{M}$) and admit an asymptotic expansion at $C_+$ with
polyhomogeneous coefficients.  In this
context, conormality\footnote{Here we choose to use $L^\infty$-based
  conormal spaces as opposed to the $H^s_b$-based spaces used in
  \cite{coneradfield} as the $L^\infty$-based spaces make the proof of
Lemma \ref{lemma:phg-lift} slightly clearer.  The equivalence of
$L^\infty$-based conormality and Sobolev space-based conormality
follows from the Sobolev embedding theorem and is discussed in
\cite{Melrose:MWC} in Chapter 4.} is equivalent to the existence of weights $\ell, \ell' \in
\mathbb{R}$ such that for all $j,  k \in \mathbb{N}_0, \alpha \in
\mathbb{N}_0^{\dim Z}$,
$$
(x \partial_x)^j (\rho \partial_\rho)^k \partial_\theta^\alpha
(x^{-\ell} \rho^{-\ell'} u)  \in L^\infty,
$$
while polyhomogeneity at $C_+$ is the condition that $u$ admits an
asymptotic expansion with terms of the form
$\rho^s (\log \rho)^k a(x, \theta)$ with $s \in \mathbb{C}$,
$k \in \mathbb{N}_0$ and $k$ bounded for $\Re s < L$.  The precise
nature of the expansion here is the key to our arguments.
We describe two notions of polyhomogeneity below.  First,
  we describe the more common notion, here termed (full)
  polyhomogeneity, which characterizes the joint expansion at
  $C_{+}\cap \scri^{+}$.  We then describe the slightly weaker notion
  governing the expansion at $C_{+}\cap \cf$, which we call partial
  polyhomogeneity. Both notions describe expansions in terms of
  \emph{index sets}; an index set $\E$ is a discrete subset of
  $\mathbb{C}\times \mathbb{N}_{0}$ so that $\{ (\sigma,k) \in \E :
  \Im \sigma > -R\}$ is finite for each $R$.

Recall that a distribution $w$ on
$\overline{M}$ is \textbf{(fully) polyhomogeneous} at $C_{+} \cap \scri^{+}$ if it admits a
joint asymptotic expansion there, meaning that $w$ is conormal and for
any $L$ 
$$
w(v, \zeta, \theta) = \sum_{(\sigma, k) \in \E, \Im \sigma > -L \atop
  (\tau, l) \in \mathcal{F}, \Im \tau> -L} v^{i\sigma}(\log
v)^{k} \zeta^{i\tau}(\log
\zeta)^{l}b_{\sigma k \tau l}(\theta) + (v\zeta)^{L} w'(v, \zeta, \theta),
$$
with $w'$ a bounded conormal distribution, together with separate
asymptotics at the two faces.  (Here $v$ is a defining function for
$\scri^{+}$ while $\zeta = \rho/v = s^{-1}$ is a defining function
for $C_{+}$.) 

The distinction between partially and
fully polyhomogeneous distributions lies in the fact that
partially polyhomogeneous distributions may not in general have an
expansion at $\cf$ at all, in particular they may not have a joint
expansion.  (Incidentally, it is possible to have expansions at the
interiors of both faces separately but no joint expansion, a simple
example being the function $r = \sqrt{x^2 + y^2}$ on the positive
upper half quadrant.)

A fully polyhomogeneous distribution on the whole of
  $\overline{M}$ admits expansions at all faces and joint expansions
  at all intersections of faces.  In contrast, the partially
  polyhomogeneous distributions which arise as solutions here admit
  expansions at all faces except possibly $\cf$.  Our analysis
  therefore focuses on the region near the intersection of $\cf$ with
  $C_+$.  Specifically, given index sets $\mathcal{E}$ and $\F$,
we say that a distribution $u$ is \textbf{partially
    polyhomogeneous} at $C_{+}$ (near $\cf$) with index set $\E$ at
  $C_{+}$ and $\F$ at $\cf$ if, for each $L$, in sets $\{ x < c \}$,
i.e.\ away from $\scri_+$,
\begin{equation}\label{eq:first expansion}
  u(x, \rho, \theta) = \sum_{(\sigma, k) \in \E, \Im \sigma > -L} \rho^{i\sigma}(\log
  \rho)^{k}a_{\sigma k}(x, \theta) + \rho^{L} u'(x, \rho, \theta),
\end{equation}
where the remainder $u'(x, \rho, \theta)$ is a bounded conormal
function and the $a_{\sigma k}$ are themselves polyhomogeneous
distributions on $C^+$ with a fixed index set $\mathcal{F}$ at
$\cf$.  (This means $a_{\sigma k} \sim \sum _{(\tau, l) \in
  \mathcal{F}} x^{i \tau} (\log x)^p c_{\tau, p}$ as $x \to 0$.)
In
other words, $u$ enjoys an expansion at $C_{+}$ in which each terms is
jointly polyhomogeneous at $C_{+}$ and $\cf$ but the remainder is in principle
only conormal at $\cf$.
\emph{Our main observation is that such a
distribution $u$ pulls back to the blown up space $X$ to be fully
polyhomogeneous at $C^+$ with index set $\mathcal{E}$ and partially
polyhomogeneous at $\tf_+$ with index set $\mathcal{F} +
\mathcal{E}$ at $\tf_+$ and index set $\mathcal{F}$ at $\cf$.}
This allows us to conclude the distinct asymptotic behavior described
above.  We formalize this pullback statement in Lemma \ref{lemma:phg-lift}.

As polyhomgeneity is essentially a local property near boundary faces
of a manifold with corners, we consider now the model of a manifold
with codimension two corners.  Let
$$
\reals_{++}^{2} = \{ (z, w) \in \mathbb{R}^2:  z, w \ge 0 \}
$$
denote the closed upper right quadrant, let $H_1 = \{ z = 0 \}, H_2 =
\{ w = 0 \}$.  Let $v \colon
\reals_{++}^{2} \longrightarrow \mathbb{C}$ be a conormal
distribution supported near $0$, and assume that $v$ is partially
polyhomogeneous at $H_1$ with index sets $\mathcal{E}$ at $H_1$ and
$\mathcal{F}$ at $H_{2}$. Let $\ff$ denote the new face of the blow up
$[\reals_{++}^{2} : (0,0) ]$.
\begin{lemma}
  \label{lemma:phg-lift}
  Let $U \subset \mathbb{R}^N$ be an open set. Let $v$ be a
  distribution on $\reals_{++}^{2} \times U$ which is partially
  polyhomogeneous at $H_1$ with index set $\mathcal{E}$ at $H_1$ and
  $\mathcal{F}$ at $H_2$.  
  Then the pullback of $v$ to the blown-up space
  $[\reals_{++}^{2};\{(0,0)\}] \times U$ is fully polyhomogeneous at
  $H_1$ with index set $\E$ and partially polyhomogeneous at $\ff
  \times U$ with index set $\E + \mathcal{F}$ there and index
  $\mathcal{F}$ at $H_2$.
  \end{lemma}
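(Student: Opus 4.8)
The plan is to work locally near the two corners of the front face $\ff$ of the blow-up $[\reals_{++}^{2};\{(0,0)\}]$, pulling the partially polyhomogeneous expansion of $v$ through the two projective coordinate charts and reorganizing the resulting terms. Since the blow-down map acts as the identity on the $U$ factor and all conormal estimates include $\partial_y$-derivatives, every estimate below is (locally) uniform in $y \in U$, and I suppress $U$ from the notation. Recall that partial polyhomogeneity of $v$ at $H_1$ means that near $\{z=0\}$ we may write, for every $L$,
$$
v(z,w,y) = \sum_{(\sigma,k)\in\E,\ \Im\sigma > -L} z^{i\sigma}(\log z)^k a_{\sigma k}(w,y) + z^L v'(z,w,y),
$$
where $v'$ is bounded and conormal and each coefficient is polyhomogeneous at $H_2=\{w=0\}$ with index set $\F$, i.e.\ $a_{\sigma k}(w,y) \sim \sum_{(\tau,l)\in\F} w^{i\tau}(\log w)^l c^{\sigma k}_{\tau l}(y)$ as $w\to 0$.

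First I would record the two charts on $[\reals_{++}^{2};\{(0,0)\}]$: near $H_1\cap\ff$ the functions $(s_1,w)$ with $s_1=z/w$, so that $s_1$ defines $H_1$ and $w$ defines $\ff$; and near $H_2\cap\ff$ the functions $(z,s_2)$ with $s_2=w/z$, so that $z$ defines $\ff$ and $s_2$ defines $H_2$. Substituting $z=s_1 w$ (resp.\ $w=s_2 z$), expanding the logarithms via $(\log(s_1 w))^k=\sum_j \binom{k}{j}(\log s_1)^j(\log w)^{k-j}$ (resp.\ $(\log(s_2 z))^l$), and inserting the $\F$-expansion of $a_{\sigma k}$, the generic term $z^{i\sigma}(\log z)^k a_{\sigma k}$ becomes a finite sum of monomials whose $H_1$-exponents lie in $\E$, whose $H_2$-exponents lie in $\F$, and whose $\ff$-exponents lie in $\E+\F$ (the log powers adding under the products, matching the arithmetic of index sets and their downward closure in the logarithmic variable). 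This already produces the claimed index sets; the content of the lemma is then entirely in the behavior of the remainder.

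The decisive step is to track $z^L v'$ through each chart. Near $H_1\cap\ff$ it pulls back to $s_1^L w^L\, v'(s_1 w,w,y)$, which decays to order $L$ simultaneously at $H_1$ and at $\ff$ and, since the blow-down map is a b-map so that pullback preserves conormality, remains conormal. Letting $L\to\infty$ and choosing the truncation order of each $a_{\sigma k}$-expansion large enough to dominate the finitely many exponents with $\Im\sigma>-L$, I obtain a genuine \emph{joint} expansion, hence full polyhomogeneity at $H_1$ with index $\E$ (and index $\E+\F$ at $\ff$). Near $H_2\cap\ff$, by contrast, $z^L v'$ pulls back to $z^L\, v'(z,s_2 z,y)$, which carries the factor $z^L=\rho_{\ff}^L$ decaying at $\ff$ but, because $v'$ is only assumed conormal at $\{w=0\}$, is merely conormal---not polyhomogeneous---at $H_2$. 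This is exactly the assertion that the lift is \emph{partially} polyhomogeneous at $\ff$ with index $\E+\F$ there and index $\F$ at $H_2$. On the interior of $\ff$ (where $s_2$ stays bounded away from $0$) the same substitution gives an ordinary expansion in $\rho_{\ff}$ with conormal coefficients, and the two charts are patched by a partition of unity.

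I expect the main obstacle to be the bookkeeping around the remainder: one must verify that truncating each polyhomogeneous expansion of $a_{\sigma k}$ at finite order, together with the $z^L v'$ term, yields a \emph{single} remainder decaying to the prescribed order at the relevant face(s) while retaining conormality---and, crucially, that this combined remainder decays at both $H_1$ and $\ff$ near the first corner but only at $\ff$ (remaining conormal at $H_2$) near the second. This asymmetry, rooted solely in whether the conormal-only factor $v'$ sits against the face that survives the blow-up ($H_2$) or against the separated face $H_1$, is precisely what distinguishes the full polyhomogeneity at $H_1$ from the merely partial polyhomogeneity at $\ff$.
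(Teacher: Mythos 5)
Your proposal is correct and follows essentially the same route as the paper's proof: the same two projective charts $(z/w,\,w)$ and $(z,\,w/z)$ near the two corners of $\ff$, the same substitution of the partially polyhomogeneous expansion into each chart, and the same observation that the conormal remainder $z^{L}v'$ acquires decay at both $H_1$ and $\ff$ in the first chart but only at $\ff$ in the second, which is exactly what yields full polyhomogeneity at $H_1$ versus partial polyhomogeneity at $\ff$. The bookkeeping you flag (combining the truncations into a single remainder by taking $L$ and the $\F$-truncation order $M$ sufficiently negative) is handled in the paper by the explicit choice $R<\inf\{L,\,M-\Im\sigma-\Im\tau\}$, just as you anticipate.
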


\begin{proof}  
  We suppress the factor $U$ in the domain as the smooth dependence on
  that parameter is irrelevant in the proof.

  By assumption, near $(0,0)$ we can write, for every $L, M \in
  \mathbb{R}$,
  \begin{equation}\label{eq:some expansions}
    \begin{split}
  v &= \sum_{(\sigma, k) \in \E, \Im \sigma > -L} z^{i\sigma}(\log
      z)^{k}a_{\sigma k} + z^{L} v' \\
    &= \sum_{(\sigma, k) \in \E, \Im \sigma > -L} z^{i\sigma}(\log
      z)^{k}\large( \sum_{(\tau, l) \in \mathcal{F}, \Im \tau > -M} w^{i\tau}
      (\log w)^l c_{\sigma k \tau l} + w^M v'' \large) + z^{L} v'
    \end{split}
  \end{equation}
  where $v', v''$ are conormal functions.

Blowing up, the functions $w, s = z/w$ define coordinates near the
intersection of $\ff$ with $H_1$.  The first line of \eqref{eq:some
  expansions}, with $z = sw$, continues to provide a polyhomogeneous
expansions with index set $\mathcal{E}$ but now with index set
$\mathcal{E} + \mathcal{F}$ at $\ff$ since
  \begin{align*}
  v = \sum_{(\sigma, k) \in \E, \Im \sigma > -L} s^{i\sigma} w^{i \sigma} (\log s +
    \log w)^k
 a_{\sigma k}(w) + (sw)^{L} v'.
\end{align*}
As pullbacks via blowdown maps preserve
conormality~\cite[Section 4.11]{Melrose:MWC}, $v'$ is conormal on
$[\reals_{++}^{2};\{(0,0)\}]$.

Near the intersection of $\ff$ with $H_2$, $z, t = w/z$ define
coordinates, with $z$ a defining function of $\ff$ and $t$ a defining
function of $H_2$, so using the second line of \eqref{eq:some expansions} we
get
$$
v= \sum_{(\sigma, k) \in \E, \Im \sigma > -L \atop (\tau, l) \in \mathcal{F}, \Im \tau> -M} z^{i\sigma + i \tau}(\log
z)^{k} t^{i\tau}(\log z + \log t)^l c_{\sigma k \tau l} + z^R v'''
$$
where $v'''$ is a conormal distribution and
$$
R < \inf\{ L , M - \Im \sigma - \Im \tau : (\sigma, 0) \in
\mathcal{E}, (\tau, 0) \in \mathcal{F} \}.
$$
This gives the asymptotic expansion with index set $\mathcal{E} +
\mathcal{F}$ in $z$ with terms having expansions in index set
$\mathcal{F}$ in $t$, and choosing $M, L$ sufficiently negative gives
the result.
\end{proof}

\section{Asymptotic expansions and Proof of Theorem \ref{metathm}}
\label{sec:an-asympt-expans}

That solutions of the equations considered here enjoy
  partial polyhomogeneity near $C_{+}$ and $\cf$ as well as (full)
  polyhomogeneity near $C_{+}\cap \scri^{+}$ follows from the prior works~\cite{DirCoulRF, coneradfield}.    Those papers describe the
asymptotics of the radiation field for this system, i.e., the behavior
of solutions near $C_{+}\cap \scri^{+}$.  In doing so, it
characterizes the asymptotic behavior all along $C_{+}$ in terms of
powers of $\rho$, $x$, and hypergeometric functions along $C_{+}$.

  We first turn our attention to the massless Dirac--Coulomb
  system~\cite{DirCoulRF}.  To that end, we define the relevant index
  sets\footnote{For the equations considered here, there are no
    logarithms in the expansions and so we drop the $\mathbb{N}_{0}$
    factor from the index set notation.} for
  $\abs{\charge}<1/2, \charge \neq 0$:
\begin{align*}
  \E_{\DC} &= \left\{ -i \left( 2 + \ell + \sqrt{\kappa
             ^{2}-\charge^{2}}\right) : \ell \in \mathbb{N}_{0},
             \kappa \in \mathbb{Z}\setminus \{ 0\}\right\}, \\
  \F_{\DC} &= \left\{ -i \left( -1 + j +
             \sqrt{\kappa^{2}-\charge^{2}}\right) : j \in  \mathbb{N}_{0}, \kappa \in \mathbb{Z}\setminus \{0\}\right\}.
\end{align*}

One consequence of that work is the following theorem:
\begin{theorem}
  \label{thm:dirac-coulomb}
  If $\psi$ is the solution to the massless Dirac--Coulomb
  system~\eqref{eq:dc} with smooth, compactly supported initial data,
  then, on $\overline{M}$, $\psi$ is (fully) polyhomogeneous at $C_{+}\cap \scri^{+}$ with
  index set $\E_{\DC}$ at $C_{+}$ and partially polyhomogeneous at
  $C_{+}$ near $\cf$ with index sets $\E_{\DC}$ at $C_{+}$ and
  $\F_{\DC}$ at $\cf$.  
\end{theorem}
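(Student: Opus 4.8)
The plan is to extract the statement directly from the radiation field analysis carried out in \cite{DirCoulRF}, translating its conclusions into the language of full and partial polyhomogeneity introduced above. The starting point is the angular momentum decomposition of the Dirac--Coulomb operator: projecting onto the spinor spherical harmonics indexed by the Dirac quantum number $\kappa \in \ZZ \setminus \{0\}$ reduces \eqref{eq:dc} to a family of radial systems, each carrying the Coulombic modification $\sqrt{\kappa^{2}-\charge^{2}}$ of its indicial root (which is real since $|\kappa| \geq 1 > |\charge|$). After passing to the Mellin transform in a suitable time variable, \cite{DirCoulRF} represents the solution in each sector explicitly in terms of hypergeometric functions along $C_{+}$; it is from the poles of this representation, together with the integer step structure in $\ell$ (and $j$) coming from the series expansions of those hypergeometric functions, that the index sets $\E_{\DC}$ and $\F_{\DC}$ arise. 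First I would record precisely which results of \cite{DirCoulRF} are being invoked and fix the correspondence between the resonances computed there and the pairs $(\sigma,k)$ appearing in $\E_{\DC}$ and $\F_{\DC}$, noting that no logarithmic terms occur so that the $\NN_0$ factor is trivial.

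Granting this dictionary, full polyhomogeneity at $C_{+}\cap \scri^{+}$ with index set $\E_{\DC}$ at $C_{+}$ is essentially the main conclusion of \cite{DirCoulRF}: the radiation field analysis there produces a joint asymptotic expansion of the solution at the corner where future null infinity meets future timelike infinity, with the $C_{+}$ exponents given exactly by $\E_{\DC}$ and a companion index set at $\scri^{+}$ that is not needed for the present statement. Here I would simply check that the joint expansion produced there matches, term by term, the definition of full polyhomogeneity at $C_{+}\cap \scri^{+}$ recalled above, including that the remainder is bounded and conormal on $\overline{M}$.

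The substance of the proof is the partial polyhomogeneity near $\cf$. I would take the expansion \eqref{eq:first expansion}, whose coefficients $a_{\sigma k}(x,\theta)$ are, by \cite{DirCoulRF}, the hypergeometric profiles along $C_{+}$, and read off their behavior as $x \to 0$, i.e.\ as the conic face is approached. The indicial analysis at the cone point forces each such coefficient to expand in powers $x^{i\tau}$ with $\tau \in \F_{\DC}$; the shift in the leading constant from $2$ (at $C_{+}$) to $-1$ (at $\cf$) reflects the different homogeneity weights carried by the two faces, and the appearance of these powers is exactly the endpoint (connection-formula) asymptotics of the hypergeometric functions. The key point is that the argument establishes polyhomogeneity of each fixed coefficient $a_{\sigma k}$ at $\cf$ but only conormality of the remainder $\rho^{L}u'$ there, which is precisely the partial (as opposed to full) polyhomogeneity asserted: the solution need not admit a joint expansion at $C_{+}\cap \cf$.

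The main obstacle is the passage from these fixed-sector statements to a statement about $\psi$ itself, which requires summing the sector-by-sector expansions over $\kappa \in \ZZ \setminus \{0\}$ and controlling the tail. Because the anomalous exponent $\sqrt{\kappa^{2}-\charge^{2}}$ grows with $|\kappa|$, only finitely many sectors contribute below any fixed threshold $\Im \sigma > -L$, so the local finiteness required of an index set is automatic; the genuine work is the uniform conormal (and boundedness) estimate on the remainder on $\overline{M}$. I expect this to follow from the smoothness and compact support of the initial data together with the uniform hypergeometric bounds already established in \cite{DirCoulRF}, so that the infinite sum over $\kappa$ converges to a distribution with exactly the claimed index sets $\E_{\DC}$ at $C_{+}$ and $\F_{\DC}$ at $\cf$.
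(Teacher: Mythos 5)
Your proposal is correct and follows essentially the same route as the paper, which proves Theorem~\ref{thm:dirac-coulomb} simply by citing the radiation-field analysis of \cite{DirCoulRF} and reading off the index sets from the hypergeometric profiles along $C_{+}$ (the paper carries out the analogous resonance computation explicitly only for the inverse-square case, in Appendix~\ref{sec:reson-calc}). Your sketch of the $\kappa$-sector decomposition, the connection-formula asymptotics at $\cf$, and the local finiteness of the index sets from the growth of $\sqrt{\kappa^{2}-\charge^{2}}$ is a faithful, somewhat more detailed account of exactly what that citation is meant to supply.
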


In particular, from Lemma~\ref{lemma:phg-lift}, we conclude
that, on $X$, $\psi$ is partially polyhomgeneous
  at $\tf_{+} \cap \cf$ with index sets $\E_{\DC}+\F_{\DC}$ at $\tf_{+}$ and
  $\F_{\DC}$ at $\cf$.  It is additionally (fully) polyhomogeneous at
  all other boundary faces; at $C_{+}$ the index set is $\E_{\DC}$.
The bounds in Theorem~\ref{metathm} for solutions of the massless
Dirac--Coulomb system then follow by considering the largest term in
the expansions at $\tf_{+}$ and $C_{+}$, respectively.

\emph{This allows immediately for the proof of Theorem
  \ref{metathm} for Dirac--Coulomb, in the sense that the theorem
  statements are merely appropriate interpretations of the
  polyhomogeneity statements we have derived.  Indeed, a distribution $\psi$ which
is partially polyhomogeneous at $\tf_{+} \cap \cf$ with index sets
$\E_{\DC}+\F_{\DC}$ necessarily has an asymptotic
expansion with leading order $t^{-3 - \alpha(\charge)}$ at that face;
thus, in particular, if $U \Subset \tf_{+}$ is any open set compactly
contained in $\tf_{+}$ then: (1) the flat space coordinate $z$ is a
valid coordinate on $U$ and (2) $\psi(z, t) t^{3 + \alpha(\charge)}$
is a bounded function for $z \in U$.  The spatial cutoff $\chi$ in the
statement of Theorem \ref{metathm} serves exactly to cutoff to such a
set $U$ and thus for such $\chi$ we have $\chi(z) \psi(z, t) t^{3 +
  \alpha(\charge)}$ is bounded.  On the other hand, any ray $(t,
\gamma t, \theta)$ with $0 < |\theta| < 1$ approaches the interior of
the face $C^+$ as $t \to \infty$, the function $\psi(t,
\gamma t, \theta)$ is merely the restriction of $\psi$ to a particular
path of approach to this boundary hypersurface; thus the (full)
polyhomogeneity statement at that face immediately gives the stated
$\sim t^{-3 - \alpha(\charge)/2}$ behavior claimed in the theorem.}   

For the wave equation~\eqref{eq:inv-square} on
$\reals \times \reals^{n}$ with an inverse square potential, we appeal
to the results of the earlier paper about conic
manifolds~\cite{coneradfield}.  That paper describes the asymptotics
of solutions of the wave equation on cones, but the techniques apply
with no change to the wave equation on $\reals^{n}$ with an inverse
square potential.  To that end, we define the relevant
  index sets\footnote{As before, we drop the $\mathbb{N}_{0}$ factor
    from the notation.} for the inverse square (IS) problem with
  $\digamma > - \left( \frac{n-2}{2}\right)^{2}, \digamma \neq 0$.  In
  the below, we let
  $\nu_{j} = \sqrt{\left( \frac{n-2}{2}\right)^{2} +
    \lambda_{j}+\digamma}$, where $\lambda_{j}$, $j=0, 1, \dots$,
  denote the eigenvalues of the Laplacian, i.e.,
  $\lambda_{j} = j(j+n-2)$.
  \begin{align*}
    \E_{\invsq} &= \left\{ - i \left( \frac{n}{2} + k + \nu_{j}
                  \right) : j,k = 0, 1, 2, \dots,
                  \frac{1}{2}+\nu_{j}\notin \mathbb{Z} \right\}, \\
    \F_{\invsq} &= \left\{ -i\left( -\frac{n-2}{2} + \ell +
                  \nu_{j}\right) : j,\ell = 0, 1, 2, \dots, \nu_{j}
                  \notin \mathbb{Z}\right\}.
  \end{align*}
The methods of the paper~\cite{coneradfield} about product cones,
together with the calculation in Appendix~\ref{sec:reson-calc},
then establish the following theorem:
\begin{theorem}
  \label{thm:inv-square}
  If $u$ is the solution of the wave equation~\eqref{eq:inv-square}
  with smooth, compactly supported initial data, then, on
  $\overline{M}$, $u$ is (fully) polyhomogeneous at
  $C_{+}\cap \scri^{+}$ with index set $\E_{\invsq}$ at $C_{+}$ and
  partially polyhomogeneous at $C_{+}$ near $\cf$ with index sets
  $\E_{\invsq}$ at $C_{+}$ and $\F_{\invsq}$ at $\cf$.
\end{theorem}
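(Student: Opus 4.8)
\emph{Proof proposal.} The plan is to realize the inverse-square problem on $\reals^n$ as a wave equation on a product cone, transport the polyhomogeneity framework of \cite{coneradfield} essentially verbatim, and then identify the two index sets through the resonance computation of Appendix~\ref{sec:reson-calc}. First I separate variables: writing $\reals^n \setminus \{0\} = (0,\infty)_r \times \sphere^{n-1}$ and projecting onto the $j$-th eigenspace of the spherical Laplacian (eigenvalue $\lambda_j = j(j+n-2)$), the spatial operator $-\lap + \digamma/r^2$ acts on mode $j$ as the radial operator $-\partial_r^2 - \frac{n-1}{r}\partial_r + (\lambda_j + \digamma)/r^2$. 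Conjugating by $r^{(n-1)/2}$ converts this into the Bessel operator $-\partial_r^2 + (\nu_j^2 - \frac14)/r^2$ with $\nu_j = \sqrt{\left(\frac{n-2}{2}\right)^2 + \lambda_j + \digamma}$. This is precisely the radial model treated in \cite{coneradfield} for the product cone, the only change being that the geometric index $\sqrt{\left(\frac{n-2}{2}\right)^2 + \lambda_j}$ is replaced by $\nu_j$; the potential merely shifts the effective cross-sectional eigenvalue by $\digamma$. Since the entire microlocal and radiation-field apparatus of \cite{coneradfield} depends only on the conic structure of the operator and not on the precise value of this index, every step there applies without modification, and because the initial data lies in $C^\infty_c(\reals^n \setminus \{0\})$ its spherical-harmonic coefficients decay rapidly in $j$, which controls the sum over modes.

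Second, I invoke the conclusions of \cite{coneradfield}: on $\overline{M}$ the solution $u$ is conormal, fully polyhomogeneous at $C_+ \cap \scri^+$, and partially polyhomogeneous at $C_+$ near $\cf$. What remains is to pin down the \emph{exponents}, and these are governed by two distinct spectral quantities: the index set at $C_+$ is dictated by the resonances of the stationary operator $-\lap + \digamma/r^2 - \sigma^2$, i.e.\ the poles of its meromorphically continued resolvent, while the index set at $\cf$ is dictated by the indicial roots of the conic operator at the cone tip $r = 0$.

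Third --- the content of Appendix~\ref{sec:reson-calc} --- I compute these two sets mode by mode. On mode $j$ the resolvent is built from the Bessel functions $J_{\pm \nu_j}$; their behavior as $r \to 0$ gives the indicial roots $-\frac{n-2}{2} \pm \nu_j$ (as powers of the $\cf$ defining function), and selecting the Friedrichs root and iterating produces the family $-i\left(-\frac{n-2}{2} + \ell + \nu_j\right)$, i.e.\ $\F_{\invsq}$, while the low-energy expansion of the resolvent produces the resonance exponents $-i\left(\frac{n}{2} + k + \nu_j\right)$, i.e.\ $\E_{\invsq}$. The integrality conditions $\nu_j \notin \integers$ (at $\cf$) and $\frac12 + \nu_j \notin \integers$ (at $C_+$) excise precisely the modes on which the two relevant families of exponents collide; on those modes either a logarithm appears, restoring the $\mathbb{N}_0$ factor suppressed in the index-set notation, or the would-be leading coefficient cancels. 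This is the inverse-square analogue of the Huygens-type cancellation responsible for the $\sqrt{9+4\digamma}$ correction in Theorem~\ref{metathm}: when $\frac12 + \nu_0 \in \integers$ the entire $j=0$ family drops out of the $C_+$ expansion, so that the leading timelike behavior is instead carried by the next spherical mode.

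The main obstacle is the resonance calculation itself, and specifically the bookkeeping in the exceptional integer cases. For generic $\nu_j$ the two exponent families are read off transparently from the connection formulas between $J_{\nu_j}$ and $J_{-\nu_j}$; the delicate point is to verify, when $\nu_j$ or $\frac12 + \nu_j$ is an integer, that the naively-expected term either genuinely cancels or is correctly replaced by a logarithmic term, so that the index sets as stated (with those modes removed) remain accurate. Once this is settled, the theorem follows by assembling the mode-by-mode expansions and summing over $j$.
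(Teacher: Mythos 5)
Your first step---realizing the inverse-square problem as a product-cone wave equation in which the cross-sectional eigenvalue $\lambda_j$ is shifted to $\lambda_j+\digamma$, so that the entire apparatus of \cite{coneradfield} applies verbatim with $\sqrt{(\tfrac{n-2}{2})^2+\lambda_j}$ replaced by $\nu_j$---is exactly the paper's argument, and the structural conclusion (conormality, full polyhomogeneity at $C_+\cap\scri^+$, partial polyhomogeneity at $C_+$ near $\cf$) is correctly imported from there. The divergence, and the gap, is in how you identify $\E_{\invsq}$. The paper does \emph{not} extract the $C_+$ exponents from a low-energy expansion of the spatial resolvent of $-\lap+\digamma/r^2$. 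It lifts $L_0$ to the compactification in the coordinates $\rho=\tfrac{1}{t+r}$, $x=\tfrac{2r}{t+r}$, forms the reduced normal operator $P_\sigma$ by replacing $\rho\pd[\rho]$ with $i\sigma$ (so $\sigma$ is Mellin-dual to the \emph{scaling} variable, not a time frequency), separates into spherical modes, and conjugates to a hypergeometric equation on $x\in[0,1]$. The elements of $\E_{\invsq}$ are the $\sigma$ at which the solution $y_1$ regular at $x=0$ becomes proportional to the solution $y_4$ admissible at $C_+$, located via the vanishing of a Gamma-factor in Kummer's connection formula; the elements of $\F_{\invsq}$ are then read off from the Taylor expansion of the resonant state $x^{\alpha}y_1$ at $x=0$. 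Your Bessel-function computation correctly reproduces the indicial roots $-\tfrac{n-2}{2}\pm\nu_j$ and hence $\F_{\invsq}$, since the normal operator at the cone tip is the same in either picture.

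For $\E_{\invsq}$, however, your argument is an assertion rather than a derivation, and as stated it is not correct: the operator $-\lap+\digamma/r^2-\sigma^2$ is scale-invariant in the relevant sense and its continued resolvent has \emph{no poles}; the low-energy structure is a branch-point expansion in powers $\sigma^{2\nu_j}$, and converting that into pointwise asymptotics is the entire content of a separate line of work (Gajic~\cite{Gajic}, Hintz~\cite{Hintz_PL}), not a one-line consequence. Moreover, what such low-energy expansions most directly control is decay on compact spatial sets, i.e.\ the face $\tf_+$, whereas $\E_{\invsq}$ is the index set at $C_+$, reached along rays $r=\gamma t$; you give no mechanism tying the spatial resolvent at $\sigma=0$ to that face. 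Finally, the exceptional cases are left genuinely unresolved: you say that when $\tfrac12+\nu_j\in\integers$ ``either a logarithm appears \dots or the would-be leading coefficient cancels,'' but the theorem requires the specific second alternative---the paper shows these $\sigma_{j,k}$ are poles of the scattering matrix but not of $P_\sigma^{-1}$, so the corresponding term is supported on $S_+$ and contributes nothing at $C_+$ (for $j=0$ this is the conjugated odd-dimensional wave equation phenomenon). To close the gap you should either carry out the hypergeometric/Kummer computation of Appendix~\ref{sec:reson-calc}, or, if you want to keep the Bessel route, supply the full low-energy resolvent analysis and the argument passing from it to the expansion at $C_+$.
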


In that paper, the terms in the expansion are characterized by the
resonances of the associated Laplacian on a hyperbolic cone.  If
$\frac{1}{2}+\nu_{j}$ is an integer, it is a pole of the scattering
matrix but not of the resolvent; in terms of the wave equations
studied here, this means that the associated term is supported
entirely in $S_{+}$ and vanishes in $C_{+}$ and therefore does not
contribute to the expansion at $C_{+}$ or $\tf_{+}$.  

Just as in the case of the Dirac--Coulomb problem,
Lemma~\ref{lemma:phg-lift} allows us to conclude that, after the
blowup, $u$ is partially polyhomogeneous at $\tf_{+}$ near $\cf$ with
index sets $\E_{\invsq} + \F_{\invsq}$ at $\tf_{+}$ and $\F_{\invsq}$
at $\cf$.  The remaining bounds in Theorem~\ref{metathm} again follow
by considering the largest terms in the expansions.  If $\frac{1}{2} +
\nu_{0}$ is
an integer, the leading term is instead 
$$\frac{n}{2}+\nu_{1} =
\frac{n}{2} + \sqrt{\left(
    \frac{n-2}{2}\right)^{2}+ n-1 + \digamma}$$ .
As $\digamma \neq 0$, $\frac{1}{2}+\nu_1$ cannot also be an
integer.\footnote{When $\frac{1}{2} + \nu_{0}$ is an integer, the
  radial part (i.e., the projection onto the lowest spherical
  harmonic) of the solution solves a conjugated odd-dimensional wave
  equation and therefore has no support at timelike
  infinity.}
The conclusion of the parts of Theorem \ref{metathm}
  related to inverse square potentials follow exactly as in the above
  discussion of Dirac--Coulomb.

\subsection{Sharpness}
\label{sec:sharp}

We now turn to a discussion of the sharpness of the
  bounds.  If the first term in each expansion is nonzero, the
  corresponding bound is saturated.  In the previous
  papers~\cite{DirCoulRF, coneradfield}, each term in the expansion
  arises via the inverse Mellin transform from a finite rank operator
  applied to the Mellin transform of the inhomogeneous data.  (For the
  initial value problem, one first converts the problem into an
  inhomogeneous forward problem.)  These operators are the residues of
  the poles of the inverse of a family of operators related to the
  equation.  

  To show that the first term is generically non-zero, it therefore
  suffices to characterize the first operator obtained this way.  As
  described in prior work, the first operator has rank one and is
  given by $\phi \otimes \psi$, where $\phi$ is in the kernel of the
  normal operator and is supported in $\overline{C}_{+}$, while $\psi$
  lies in the kernel of the adjoint of the normal operator and is
  regular at $S_{+}$.  Both $\phi$ and $\psi$ can be described in
  terms of hypergeometric functions; an energetic reader can follow
  the arguments of the prior work~\cite[Section 7]{DirCoulRF} to
  verify that $\psi$ is non-trivial in $C_{+}$.\footnote{In the case
    when
    $\frac{1}{2} + \sqrt{\left(\frac{n-2}{2}\right)^{2}+\digamma} \in
    \mathbb{Z}$, the hypergeometric function underlying $\phi$ is in
    fact a derivative of the delta function and so there is no
    contribution to the expansion at $C_{+}$.}  In particular, for
  most inhomogeneities (and hence most initial data), the first term
  in the expansion is a nonzero multiple of $\phi$.

  These arguments are similar in flavor to those of
  Hintz~\cite{Hintz_PL} demonstrating sharpness.  In both cases the
  leading term arises as the output of a rank one operator applied to
  the relevant data and thus is generically non-zero.

\appendix

\section{Resonance calculation}
\label{sec:reson-calc}

Let $\rho = \frac{1}{t+r}$ and $x = \frac{2r}{t+r}$ so that $r =
\frac{x}{2\rho}$.  If we set
\begin{align*}
  L_{0} &= \pd[t]^{2} - \lap + \frac{\digamma}{r^{2}}\\
  & = \pd[t]^{2} - \pd[r]^{2} - \frac{2}{r}\pd[r] -
    \frac{1}{r^{2}}\lap_{\sphere^{n-1}} + \frac{\digamma}{r^{2}},
\end{align*}
then $L_{0}$ lifts to
\begin{align*}
  L_{0} &= \rho^{2} (\rho\pd[\rho] + x\pd[x])^{2} + \rho^{2} (\rho
  \pd[\rho] + x\pd[x]) - \rho^{2} (2\pd[x] - x\pd[x] -
  \rho\pd[\rho])^{2} \\
  &\quad + \rho^{2} (2\pd[x] - x\pd[x] - \rho\pd[\rho]) -
  2\rho^{2}\frac{n-1}{x}(2\pd[x] - x\pd[x] - \rho \pd[\rho]) -
  \frac{4\rho^{2}}{x^{2}}\lap_{\sphere^{n-1}} + \frac{4\digamma\rho^{2}}{x^{2}}.
\end{align*}

The paper~\cite{coneradfield} and its predecessors consider the reduced normal
operator of 
\begin{align*}
  L &= \rho^{-2}\rho^{-\frac{n-1}{2}}L_{0}\rho^{\frac{n-1}{2}}.
\end{align*}
As this operator is homogeneous of degree $0$ in $\rho$, the reduced
normal operator is given by replacing $\rho \pd[\rho]$ with $i\sigma$,
yielding
\begin{align*}
  P_{\sigma} &= -\widehat{N}(L) = 4(1-x)\pd[x]^{2} + \frac{n-1}{x}
               4\pd[x] - \left( 4 + (i\sigma + \frac{n-1}{2}) +
               2(n-1)\right)\pd[x] \\
               & \hspace{1cm} +
               \frac{4}{x^{2}}\lap_{\sphere^{n-1}} - \frac{4\digamma}{x^{2}}
               - 2 \left( \frac{n-1}{x}\right)(i\sigma + \frac{n-1}{2}).
\end{align*}
The poles of the inverse of this operator (on appropriate
$\sigma$-dependent variable-order Sobolev spaces) yield the exponents
seen above.  The corresponding resonant states are solutions $v$ of
$P_{\sigma}v = 0$ lying in these same spaces.

For this particular operator, we can see the poles explicitly in terms
of the failure of hypergeometric functions to remain linearly
independent.  Indeed, separating into angular modes $\phi_{j}$ with
eigenvalues $-\lambda_{j}$, the radial coefficients $v_{j}$ of a solution of
$P_{\sigma}v_j = 0$ must satisfy
\begin{align*}
 0 &= xP_{\sigma,j}v_{j}  \\
 & = 4x (1-x)\pd[x]^{2}v_{j} + 4\left( n-1 -
  x(n+i\sigma)\right)\pd[x]v_{j} - 4\left(
  \frac{\digamma+\lambda_{j}}{x}\right)v_{j} - (n-1)(2i\sigma + n-1)v_{j} .
\end{align*}
Dividing by $4$, setting $P = \frac{1}{4}xP_{\sigma,j}$ and letting
$w=v_{j}$, $w$ must satisfy
\begin{align*}
  Pw &  = x(1-x)\pd[x]^{2} w + (n-1-x(n+i\sigma))\pd[x]w -
  \frac{\digamma+\lambda_{j}}{x}w - \left( \frac{n-1}{2}\right)(i\sigma+\frac{n-1}{2})w \\
 &  = 0.
\end{align*}

Conjugating by $x^{\alpha}$, where
\begin{equation*}
  \alpha = - \frac{n-2}{2} + \sqrt{\left( \frac{n-2}{2}\right)^{2} +
    \digamma+\lambda_{j}},
\end{equation*}
yields a hypergeometric equation for $y = x^{-\alpha}w$:
\begin{align*}
  x(1-x)\pd[x]^{2}y + (n-1 + 2\alpha - x (n+i\sigma + 2\alpha))\pd[x]y
  - \left( \alpha + \frac{n-1}{2}\right) \left( \alpha +
  \frac{n-1}{2}+i\sigma\right)y = 0.
\end{align*}
This is a hypergeometric differential equation with parameters (see,
e.g., \cite{DLMF} for notation)   
\begin{align*}
  a &= \frac{1}{2} + \sqrt{\left( \frac{n-2}{2}\right)^{2}
      +\lambda_{j} + \digamma},\\
  b &= \frac{1}{2} + i\sigma + \sqrt{\left( \frac{n-2}{2}\right)^{2}
      +\lambda_{j} + \digamma}, \\
  c &= 1 + 2 \sqrt{\left( \frac{n-2}{2}\right)^{2} +\lambda_{j} + \digamma}.
\end{align*}
The requirement that the solution $v$ lies locally in $H^{1}$ near
$x=0$ implies that $y$ must be a multiple of the hypergeometric function
\begin{equation*}
  y_{1} = F(a, b, c; x) = F\left( \frac{1}{2} + s, \frac{1}{2}+i\sigma
    + s, 1 + 2s; x\right),
\end{equation*}
where $s = \sqrt{\left( \frac{n-2}{2}\right)^{2}+\lambda_{j} + \digamma}$.
On the other hand, if a solution is to lie in the appropriate variable
order Sobolev space globally, at $C_{+}$ it must be a multiple of
\begin{align*}
  y_{4} & = (1-x)^{c-a-b} F(c-a, c-b, c-a-b+1; 1-x)  \\
  & =
  (1-x)^{-i\sigma}F\left( \frac{1}{2} + s, \frac{1}{2} - i\sigma + s,
    1-i\sigma; 1-x\right).
\end{align*}
The poles under consideration are therefore given by those $\sigma$
for which $y_{4}$ is a multiple of $y_{1}$.  Kummer's connection
formulae~\cite[15.10.18]{DLMF} allow us to write $y_{4}$ in terms of
the basis  of solutions $y_{1}$ and $y_{2}$, where $y_{2}$ is given by
\begin{equation*}
  y_{2} = x^{1-c} F(a-c+1, b-c+1, 2-c; x) = x^{-2s}F\left( \frac{1}{2}
    - s, \frac{1}{2} - s + i\sigma, 1 - 2s; x\right).
\end{equation*}
In this case, we have
\begin{align*}
  y_{4} &= \frac{\Gamma (1-c)\Gamma(c-a-b+1)}{\Gamma
    (1-a)\Gamma(1-b)}y_{1} +
  \frac{\Gamma(c-1)\Gamma(c-a-b+1)}{\Gamma(c-a)\Gamma(c-b)}y_{2} .
\end{align*}
In particular, the coefficient of $y_{2}$ vanishes precisely when
$\frac{1}{2}+s-i\sigma$ is a pole of the gamma function, i.e., when
\begin{equation*}
  \sigma = \sigma_{j,k} = - i\left( \frac{1}{2} + s + k\right), \quad k = 0, 1, \dots .
\end{equation*}
Moreover, the resonant state $v_{j}$ associated with $\sigma_{j,k}$ is
a multiple of $x^{\alpha}y_{1}$ and therefore is polyhomogeneous with
leading order behavior
\begin{equation*}
  x^{-\frac{n-2}{2} + s} \text{ as }x \to 0.
\end{equation*}

\newcommand{\etalchar}[1]{$^{#1}$}

\end{document}